
\documentclass[reqno,12pt]{amsart}


\usepackage{amsmath, amssymb, amsthm, epsfig}
\usepackage{hyperref, latexsym}
\usepackage{url}
\usepackage[mathscr]{euscript}
\usepackage{color}
\usepackage{harpoon}
\usepackage{url}


\usepackage{fullpage} 
\usepackage{setspace}
\onehalfspacing


\usepackage{refcheck} 
\norefnames
\nocitenames
\usepackage{mathtools}
\mathtoolsset{showonlyrefs}


\def\today{\ifcase\month\or
  January\or February\or March\or April\or May\or June\or
  July\or August\or September\or October\or November\or December\fi
  \space\number\day, \number\year}


\newtheorem{theorem}{Theorem}
\newtheorem*{problem}{Problem}
\newtheorem*{conjecture}{Conjecture}




\newcommand{\z}{\mathbb{Z}}

\renewcommand{\r}{\mathbb{R}}

\newcommand{\ft}{\widehat}
\renewcommand{\t}{\mathbb{T}}


\newcommand{\bo}{\boldsymbol}
\newcommand{\bx}{\bo x}

\newcommand{\bw}{\bo w}

\newcommand{\bu}{\bo u}
\newcommand{\bv}{\bo v}


\newcommand{\la}{\lambda}

\newcommand{\epp}{\epsilon}

\renewcommand{\leq}{\leqslant}
\renewcommand{\geq}{\geqslant}


\begin{document}


\title[]{Bounds for the lonely runner problem via linear programming}
\author[Gon\c{c}alves and Ramos]{Felipe Gon\c{c}alves and Jo\~{a}o P. G. Ramos}
\date{\today}
\subjclass[2010]{}
\keywords{}
\address{Hausdorff Center for Mathematics, Universit\"at Bonn,
Endenicher Allee 60,
53115 Bonn, Germany }
\email{goncalve@math.uni-bonn.de}
\address{Department of Mathematics, ETH Z\"urich,
R\"amistrasse 101,
8092 Z\"urich, Switzerland} 
\email{joao.ramos@math.ethz.ch}

\allowdisplaybreaks


\begin{abstract}
In this note we develop a linear programming framework to produce upper and lower bounds for the lonely runner problem.
\end{abstract}


\maketitle


\section{The Lonely Runner Problem}

Suppose you are competing in race on a circular track of perimeter $L$ with $n-1$ other runners. Assume all competitors have distinct constant speeds. The {\it gap of loneliness} is the {\it largest} length $\ell$ such that at some time $t$ in the future (assuming the race continues forever) the closest runner to you is at distance $\ell$. The lonely runner conjecture states that 
$$
\ell \geq \frac{L}{n}.
$$ 
This problem was introduced independently by Wills \cite{W67} (1967) and Cusick \cite{C73} (1973) in the context of view obstruction problems. The conjecture is known to be true for $n\leq 7$ runners. Moreover, speeds can be assumed to be distinct integers. For more on the history of partial results see Bohnman et al. \cite{BHK01} (2001), Perarnau \& Serra \cite{PS16} (2016) and Tao  \cite{T18} (2018). 

By Galilean relativity your speed can be assumed to be zero and the conjecture takes the following equivalent formulation: Let $\|x\|=\min_{n\in\z} |x-n|$ denote the distance to the nearest integer. For a vector $\bx \in \r^{n-1}$ let
$$
\mu(\bx)=\min\{\|x_1\|,...,\|x_{n-1}\|\}.
$$
Then for any vector $\bv=(v_1,...,v_{n-1})\in \z^{n-1}$ of distinct integers show that
$$
gap(\bv):=\max_{t\in \t} \mu(t\bv)\geq \frac{1}{n}, \ \ \ (\text{\it conjecture})
$$
where $\t=\r/\z$. 

In general, since $t\mapsto \mu(t\bv)$ is piece-wise linear with slopes drawn from $\{v_1,...,v_{n-1}\}$, the set of local maxima of $t\mapsto \mu(t\bv)$ is contained in the intersection of any two line segments. Therefore, if $t$ satisfies $\mu(t\bv)=gap(\bv)$ then $t\in\tfrac{1}{q}\z$, where $q$ is a factor of some $(v_j\pm v_i)$ with $i\neq j$, and therefore $gap(\bv)=a/b$ where $b$ divides $q$.



\subsection{The Linear Programming Approach}
We want to study the following two problems:

\begin{problem}[Linear Programming Problems]\label{prob:lp}
Fix a sign $\epsilon={\pm 1}$. Let $\bv=(v_1,...,v_{n-1})$ be a given vector of increasing integer speeds. We want to
\begin{align*}
& \text{Minimize}\ \ \epsilon \, \frac{\ft f(0)}{f(0)}
\end{align*}
subjected to
\begin{enumerate}
\item[(I)] $f$ is a non-zero, even and real trigonometric polynomial
$$
f(x)=\sum_{n=-D}^D \ft f(k)e^{2\pi i k x} = \ft f(0) + 2\sum_{k=1}^D \ft f(k) \cos(2\pi i kx)
$$
of degree at most $D\geq \max (v_1,...,v_{n-1})$.
\item[(II)] In case $\epsilon=+1$ we ask
$$
f(x)\geq 0 \ \ \text{for} \ \ 0\leq x\leq \tfrac12
$$
and in case $\epsilon=-1$ we ask 
$$
f(x)\leq 0 \ \ \text{for} \ \ \tfrac1{v_{n-1}+v_{n-2}}\leq x\leq \tfrac12;
$$
\item[(III)] $\epsilon\ft f(k)\leq 0$ if $k \notin\{0,v_1,...,v_{n-1}\}$.
\end{enumerate}
We denote by $\Lambda_\epp(\bv)$ the class of trigonometric polynomials satisfying (I),(II) and (III). We write
$$
\la_+(\bv)=\inf_{f\in \Lambda_+(\bv)} \frac{\ft f(0)}{f(0)} \ \ \ \text{and} \ \ \ \la_-(\bv)=\sup_{f\in \Lambda_-(\bv)} \frac{\ft f(0)}{f(0)}
$$
\end{problem}

\begin{theorem}\label{thm:main0}
Let $\bv=(v_1,...,v_{n-1})$ be a vector of increasing positive integers. Then
\begin{equation}\label{eq:mainineq+}
gap(\bv) \leq \la_+(\bv) 
\end{equation}
and
\begin{align}\label{eqmainineq-}
gap(\bv) \geq \la_-(\bv).
\end{align}
\end{theorem}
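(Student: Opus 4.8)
The plan is to treat each $f\in\Lambda_\epp(\bv)$ as a dual certificate and to deduce \eqref{eq:mainineq+} and \eqref{eqmainineq-} by weak duality: for a fixed sign $\epsilon$ and a fixed $f\in\Lambda_\epp(\bv)$ (we may assume $f(0)\neq 0$) it suffices to prove $\epsilon\, gap(\bv)\le\epsilon\,\ft f(0)/f(0)$, and then to pass to the infimum over $f\in\Lambda_+(\bv)$, respectively the supremum over $f\in\Lambda_-(\bv)$. Fix a time $t_0\in\t$ realising the gap, so that $\mu(t_0\bv)=gap(\bv)=:g$ and hence $\|t_0v_i\|\ge g$, i.e.\ $t_0v_i\bmod 1\in[g,1-g]$, for every $i$. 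The first point is to determine the sign of $f$ at this optimal configuration. If $\epsilon=+1$, then conditions (I) and (II), together with the evenness and $1$-periodicity of $f$ (via $f(x)=f(-x)=f(1-x)$), force $f\ge0$ on all of $\t$, so $f(t_0v_i)\ge0$. If $\epsilon=-1$, I would first record the elementary bound $gap(\bv)\ge\frac1{v_{n-1}+v_{n-2}}$: testing the time $t=\frac1{v_{n-1}+v_{n-2}}$ one checks that $\|tv_i\|=\frac{v_i}{v_{n-1}+v_{n-2}}\ge\frac1{v_{n-1}+v_{n-2}}$ for $i\le n-2$ and $\|tv_{n-1}\|=\frac{v_{n-2}}{v_{n-1}+v_{n-2}}\ge\frac1{v_{n-1}+v_{n-2}}$; consequently $t_0v_i\bmod 1$ lies in $\bigl[\frac1{v_{n-1}+v_{n-2}},1-\frac1{v_{n-1}+v_{n-2}}\bigr]$, and (II) gives $f(t_0v_i)\le0$. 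In either case $\epsilon f(t_0v_i)\ge0$ for all $i$.

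The heart of the argument is to test $f$ against the orbit. Writing $v_0:=0$, I would form the finite sum
\[
\Sigma:=\sum_{i=0}^{n-1}f(t_0v_i)=f(0)+\sum_{i=1}^{n-1}f(t_0v_i),
\]
which by the previous paragraph satisfies $\epsilon\Sigma\ge\epsilon f(0)$ ($\Sigma\ge f(0)$ when $\epsilon=+1$, $\Sigma\le f(0)$ when $\epsilon=-1$). Expanding via the Fourier representation of $f$ gives $\Sigma=\sum_{k=-D}^{D}\ft f(k)\,S_k$, with $S_k:=\sum_{i=0}^{n-1}e^{2\pi i k t_0v_i}$ and $S_0=n$. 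By (III) the frequencies $k\notin\{0,\pm v_1,\dots,\pm v_{n-1}\}$ obey $\epsilon\ft f(k)\le0$, and these terms are to be controlled using that $t_0v_i\bmod 1\in[g,1-g]$ (so that $\re S_k$ admits a lower bound compatible with the sign of $\ft f(k)$); the $k=0$ term contributes $n\ft f(0)$. Feeding in the identity $f(0)=\ft f(0)+2\sum_{k=1}^{D}\ft f(k)$ to eliminate the coefficients $\ft f(k)$ with $1\le k\le D$ and $k\notin\{v_1,\dots,v_{n-1}\}$, the inequality $\epsilon\Sigma\ge\epsilon f(0)$ should rearrange into a relation between $\ft f(0)$, $f(0)$, the speed coefficients $\ft f(\pm v_j)$, and $g$; if the contribution of the speed frequencies turns out to have the right sign, one is left precisely with the desired $\epsilon\, g\le\epsilon\,\ft f(0)/f(0)$.

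The hard part is exactly the control of the speed-frequency coefficients $\ft f(\pm v_j)$, whose signs are deliberately left free in (III). The crude estimate $\re S_{v_j}\ge-(n-1)$ loses far too much — it only reproduces trivial bounds of the shape $gap(\bv)\ge\frac1{2(n-1)}$ — so the fine structure of the runner positions at $t_0$ must be used; I expect one has to pass from the linear sum $\Sigma$ to the quadratic (autocorrelation) sum $\sum_{i,j=0}^{n-1}f\bigl(t_0(v_i-v_j)\bigr)=\sum_{k}\ft f(k)\,|S_k|^2$, which for $\epsilon=+1$ is accessible through a Fej\'er--Riesz factorisation $f=|h|^2$ and turns the speed-frequency contribution into a genuine square $\ft f(v_j)\,|S_{v_j}|^2$ of controllable sign. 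The remaining work is to arrange the bookkeeping so that equality holds in the extremal cases: for $\bv=(1,2,\dots,n-1)$ the Fej\'er kernel $F(x)=\bigl(\tfrac{\sin\pi n x}{\sin\pi x}\bigr)^2$ lies in $\Lambda_+(\bv)$ and gives $\ft F(0)/F(0)=\tfrac1n=gap(\bv)$, so \eqref{eq:mainineq+} must be sharp there. The case $\epsilon=-1$ of \eqref{eqmainineq-} is expected to run in parallel, with the elementary bound $gap(\bv)\ge\frac1{v_{n-1}+v_{n-2}}$ of the first paragraph taking over the role played by the global positivity of $f$ when $\epsilon=+1$.
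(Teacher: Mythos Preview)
Your plan has a genuine gap, and you have in fact already identified it: you cannot control the speed--frequency contributions $\ft f(\pm v_j)$. In the linear sum $\Sigma=\sum_k \ft f(k)S_k$ these coefficients are multiplied by $\re S_{v_j}$, whose sign you do not know; and passing to the autocorrelation sum $\sum_k \ft f(k)|S_k|^2$ does \emph{not} fix this, because $|S_{v_j}|^2\ge 0$ tells you nothing about the sign of $\ft f(v_j)|S_{v_j}|^2$ when $\ft f(v_j)$ itself is unrestricted by (III). (There is a second problem with the autocorrelation route: the left side involves $f(t_0(v_i-v_j))$, and nothing forces $\|t_0(v_i-v_j)\|\ge g$, so you lose the sign information you had for $f(t_0 v_i)$.) So neither version of the orbit--sum argument closes.

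The paper's device is precisely designed to make those uncontrolled coefficients disappear. One introduces the periodised hat function $h(x)=(\delta-\|x\|)_+$ with $\delta=gap(\bv)$, whose Fourier coefficients are $\ft h(m)=\bigl(\tfrac{\sin\pi\delta m}{\pi m}\bigr)^2\ge 0$, and evaluates the bilinear pairing
\[
\sum_{|k|\le D}\ft f(k)\,h(t_0 k)\;=\;\sum_{m\in\z}\ft h(m)\,f(mt_0).
\]
On the left, the dangerous terms $\ft f(\pm v_j)$ are multiplied by $h(t_0 v_j)=0$ (because $\|t_0 v_j\|\ge\delta$), so they drop out entirely; the $k=0$ term gives $\delta\ft f(0)$, and all remaining $k$ have $\epp\ft f(k)\le 0$ with $h(t_0 k)\ge 0$. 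On the right, the $m=0$ term gives $\delta^2 f(0)$, and for $\epp=+1$ every other summand is nonnegative. This yields $\delta\ft f(0)\ge\delta^2 f(0)$ at once. For $\epp=-1$ one uses the rationality of the optimal $t_0=p/q$ with $q\le v_{n-1}+v_{n-2}$ so that, on the right, $f(m t_0)\le 0$ whenever $m\notin b\z$; your elementary bound $gap(\bv)\ge\tfrac1{v_{n-1}+v_{n-2}}$ is correct and related in spirit, but the actual argument needs the points $mt_0$ (all multiples, not just the runners) to land in the region where $f\le 0$. The missing idea, in short, is to weight the Fourier side by $h(t_0 k)$ so that the free coefficients $\ft f(v_j)$ are annihilated rather than estimated.
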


\begin{theorem}\label{thm:main+}
Let $\bv=(v_1,...,v_{n-1})$ be a vector of increasing positive integers. Then equality is attained in \eqref{eq:mainineq+} if one of the following conditions hold:
\begin{enumerate}
\item[(i)] All $v_i$'s are odd. In this case $f(x)=\cos(\pi v_i x)^2$ is optimal for any $i=1,...,n-1$;
\item[(ii)] There exist coprime integers $a,m\geq 1$ such that $a\{1,...,m-1\}\subset \{v_1,...,v_{n-1}\}$ and all integers in $\{v_1,...,v_{n-1}\}\setminus a\{1,...,m-1\}$ are not divisible by $m$. In this case $f(x)=K_m(ax)$ is optimal, where $K_m$ is Fejer's kernel \eqref{def:fejerker}.
\item[(iii)] There exists integer $a\geq 1$ such that $\bv=a\bv'$ and $\bv'$ satisfies condition (i) or (ii). In this case if $f(x)$ is optimal for $\bv'$ then $f(ax)$ is optimal for $\bv$. 
\end{enumerate}
\end{theorem}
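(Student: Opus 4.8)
The plan is to sandwich $gap(\bv)$ using Theorem~\ref{thm:main0}. Since \eqref{eq:mainineq+} already gives $gap(\bv)\le\la_+(\bv)$ unconditionally, in each case it is enough to exhibit, for the claimed value $c$, both an admissible $f\in\Lambda_+(\bv)$ with $\ft f(0)/f(0)=c$ (so that $\la_+(\bv)\le c$) and a time $t^*\in\t$ with $\mu(t^*\bv)\ge c$ (so that $gap(\bv)\ge c$). Then $c\le gap(\bv)\le\la_+(\bv)\le c$ collapses, which simultaneously proves equality in \eqref{eq:mainineq+} and that the chosen $f$ is optimal. (Recall that a function obeying (II) with $\epsilon=+1$ is in fact $\ge 0$ on all of $\t$, since it is even and $1$-periodic; this will be used freely.)

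For (i) I would take $f(x)=\cos(\pi v_ix)^2=\tfrac12+\tfrac12\cos(2\pi v_ix)$, whose only nonzero Fourier coefficients are $\ft f(0)=\tfrac12$ and $\ft f(\pm v_i)=\tfrac14$; then (I)--(III) are immediate (for each $i$) and $\ft f(0)/f(0)=\tfrac12$. As every $v_i$ is odd, $t^*=\tfrac12$ gives $\|t^*v_i\|=\tfrac12$ for all $i$, so $\mu(t^*\bv)=\tfrac12=c$. For (ii) I would take $f(x)=K_m(ax)$. Using that $K_m\ge0$, $\deg K_m=m-1$, $\ft K_m(j)=(1-|j|/m)_+$, $\ft K_m(0)=1$ and $K_m(0)=m$, the dilate $f$ is even, nonnegative, of degree $a(m-1)\le\max_jv_j\le D$, with nonzero Fourier mass exactly on $\{\pm a,\pm 2a,\dots,\pm(m-1)a\}$, so the inclusion $a\{1,\dots,m-1\}\subset\{v_1,\dots,v_{n-1}\}$ is precisely what (III) requires; thus $f\in\Lambda_+(\bv)$ and $\ft f(0)/f(0)=1/m=c$. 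For the matching lower bound I would take $t^*=1/m$: for a speed $aj$ with $1\le j\le m-1$, coprimality $\gcd(a,m)=1$ forces $aj\not\equiv0\pmod m$, while for any remaining speed $v$ the hypothesis forces $v\not\equiv0\pmod m$; in both cases $\|t^*v\|\ge 1/m$, so $\mu(t^*\bv)\ge1/m$.

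Case (iii) is a dilation argument built on two observations: first, $gap(a\bv')=gap(\bv')$, because $t\mapsto at$ maps $\t$ onto $\t$ and $s\mapsto\mu(s\bv')$ is $1$-periodic; second, $f\mapsto f(a\cdot)$ maps $\Lambda_+(\bv')$ into $\Lambda_+(a\bv')$ without changing the objective, since evenness and nonnegativity are preserved, the degree is multiplied by $a$ (so the degree constraint (I) transfers), the Fourier support is dilated by $a$ (so (III) transfers from $\bv'$ to $\bv=a\bv'$), and the constant Fourier coefficient as well as the value at the origin are both unchanged. Hence $\la_+(a\bv')\le\la_+(\bv')$, and combined with $gap(a\bv')=gap(\bv')=\la_+(\bv')$ (which holds because $\bv'$ satisfies (i) or (ii)) and the unconditional $gap(a\bv')\le\la_+(a\bv')$ we get equality in \eqref{eq:mainineq+} for $\bv$, and $f(a\cdot)$ is optimal whenever $f$ is optimal for $\bv'$.

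Essentially everything reduces to the sandwich estimate, so I expect no conceptual obstacle; the step requiring the most care is case (ii), where one must match the Fourier support of $K_m(ax)$ (and its degree) against the admissible index set in (III), and then verify the arithmetic that $t^*=1/m$ is simultaneously good for all speeds — the scaled runners $aj$ being handled by $\gcd(a,m)=1$ and the remaining runners by non-divisibility by $m$.
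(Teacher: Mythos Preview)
Your proposal is correct and follows essentially the same approach as the paper: in each case you exhibit the claimed $f$ as admissible with $\ft f(0)/f(0)=c$ and produce a time $t^*$ (namely $t^*=\tfrac12$ in (i) and $t^*=\tfrac1m$ in (ii)) witnessing $gap(\bv)\ge c$, then sandwich. The only cosmetic difference is that in case~(ii) the paper argues $gap(\bv)\le\tfrac1m$ directly via $\mu(t\bv)\le\mu(ta\bu)\le\tfrac1m$, whereas you obtain this inequality from $\la_+(\bv)\le\tfrac1m$ and Theorem~\ref{thm:main0}; the content is the same.
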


In the range $n\leq 20$ and $\max(v_1,...,v_n)\leq 40$ we have performed a computer search in $\bv$ in conjunction with Gurobi's linear programming solver \cite{gurobi} to approximate $\la_+(\bv)$.  The sign conditions of $f$ was modelled with sampling. This produced reliable numerical approximations to what we believe is the true value of $\la_+(\bv)$. In this way we check that the only cases where $$gap(\bv)+[\text{very small error}] > \la_+(\bv)$$ for $|\bv|_\infty \leq 40$ and $n\leq 20$ were the ones contemplated by Theorems \ref{thm:main+}. This leads to the following conjecture.

\begin{conjecture}
Equality is attained in \eqref{eq:mainineq+} if and only if one of the conditions in Theorem \ref{thm:main+} hold.
\end{conjecture}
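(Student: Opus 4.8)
The forward implication is exactly Theorem \ref{thm:main+}, so the content of the conjecture is the converse: if $gap(\bv)=\la_+(\bv)$ then $\bv$ is of the form (i), (ii) or (iii). Since both $gap$ and $\la_+$ are invariant under $\bv\mapsto a\bv$ (for $gap$ by dilating the time variable, for $\la_+$ by $f\mapsto f(a\,\cdot)$ as in (iii)), I would first normalize $\gcd(v_1,\dots,v_{n-1})=1$; this reduces the classification to conditions (i) and (ii) and lets (iii) account for the general case at the very end.

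The engine of the proof is linear-programming duality. I would dualize Problem \ref{prob:lp} in the measure variable: by semi-infinite LP duality one expects
\begin{equation*}
\la_+(\bv)=\max\Big\{\nu(\{0\})\ :\ \nu\ge 0 \text{ even on } \t,\ \ft\nu(0)=1,\ \ft\nu(v_i)=0\ \forall i,\ \ft\nu(k)\ge 0\ \text{for } k\notin\{0,\pm v_1,\dots,\pm v_{n-1}\}\Big\}.
\end{equation*}
Weak duality is immediate and yields $\la_+(\bv)\ge\max_\nu\nu(\{0\})$: for feasible $f$ and $\nu$ one has $\ft f(0)\ge\int f\,d\nu\ge\nu(\{0\})f(0)$, the first step using (III) together with $\ft\nu(0)=1$ and $\ft\nu(v_i)=0$, the second using $f\ge 0$. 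I would then justify strong duality and attainment of both optima (the primal restricted to the sublevel set $f(0)=1$, $\ft f(0)\le 1$ is compact, and the dual optimum may be taken finitely supported by a Carathéodory argument, since only $O(n)$ linear constraints bind). Granting this, the hypothesis $gap(\bv)=\la_+(\bv)$ together with $\la_+(\bv)=\max_\nu\nu(\{0\})$ produces a dual optimizer $\nu^\ast$ with $\nu^\ast(\{0\})=gap(\bv)$ and a primal optimizer $f^\ast=|g|^2$ (Fejér--Riesz), and the weak-duality chain collapses to equalities. Complementary slackness then yields the two rigidity facts I will exploit: (a) $f^\ast$ vanishes on $\supp\nu^\ast\setminus\{0\}$, necessarily to second order since $f^\ast\ge0$; and (b) $\ft f^\ast(k)\,\ft\nu^\ast(k)=0$ for every forbidden frequency $k$.

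The heart of the argument is to turn (a) and (b) into the Fejér structure. Writing $g(x)=\sum_{k\in A}a_k e^{2\pi i kx}$ with $A=\supp g\subset\{0,\dots,D\}$, the objective is $\ft f^\ast(0)/f^\ast(0)=\big(\sum_k|a_k|^2\big)\big/\big|\sum_k a_k\big|^2\ge 1/|A|$, with equality in Cauchy--Schwarz if and only if all $a_k$ are equal. The plan is to show optimality forces this: the autocorrelation $\ft f^\ast(j)=\sum_k a_k\overline{a_{k-j}}$ must be $\le 0$ off $\{0,v_i\}$ by (III), and I would argue that the only nonnegative autocorrelation attaining the ratio $1/|A|$ with the correct sign pattern arises when $A$ is a dilated interval $A=a\{0,1,\dots,m-1\}$ with constant coefficients. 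This identifies $f^\ast(x)=c\,K_m(ax)$, whose positive autocorrelation frequencies are exactly $a\{1,\dots,m-1\}$; since those frequencies are not forbidden, (III) forces $a\{1,\dots,m-1\}\subseteq\{v_1,\dots,v_{n-1}\}$. Finally, because $gap(\bv)=1/m$ must be genuinely attained at $t^\ast=1/(am)$, every remaining speed must satisfy $\|v_i/(am)\|\ge 1/m$, i.e. $v_i$ is not divisible by $m$; this is precisely condition (ii), with $m=2$ degenerating to condition (i), and undoing the gcd normalization returns condition (iii).

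The genuine obstacle---and the reason this is stated as a conjecture---is the rigidity step above: proving that the extremal pair must be a dilated Fejér kernel and a uniform (Haar) measure on a cyclic subgroup $\tfrac1m\z/\z$, equivalently that $gap(\bv)=\la_+(\bv)$ already forces $gap(\bv)=1/m$. Complementary slackness alone cannot deliver this, since facts (a) and (b) hold at every optimum regardless of whether $gap=\la_+$; the extra input must come from the coincidence of the combinatorial gap with the LP value. I would attack it by classifying the extreme points of the dual polytope: $\nu^\ast$ is supported on finitely many rational points, and I would try to show that any extreme optimizer not equal to normalized Haar measure on a single cyclic subgroup either violates a sign constraint $\ft\nu^\ast(k)\ge0$ or satisfies $\nu^\ast(\{0\})<1/m\le gap(\bv)$, contradicting optimality. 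Making this dichotomy exhaustive---ruling out extremal $g$ with unequal coefficients or with support that is not a dilated interval---is the crux, and I expect it to require a delicate interplay between the nonnegativity of $f^\ast$, the prescribed sign of its autocorrelation, and the arithmetic of $\{v_1,\dots,v_{n-1}\}$ that the present numerical evidence only suggests.
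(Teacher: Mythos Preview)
This statement is a \emph{Conjecture} in the paper and is not proved there; the authors offer only numerical evidence from a computer search over $n\le 20$ and $|\bv|_\infty\le 40$. There is therefore no proof in the paper against which to compare your proposal.

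Your write-up is an honest research outline rather than a proof, and you say as much. The LP-duality framework you set up is sound --- your weak-duality chain $\ft f(0)\ge \int f\,d\nu\ge \nu(\{0\})f(0)$ is a repackaging of the hat-function argument in the proof of Theorem~\ref{thm:main0} --- and complementary slackness does yield your conditions (a) and (b) at any primal--dual optimum. But, as you correctly observe, (a) and (b) hold at \emph{every} LP optimum, irrespective of whether $gap(\bv)=\la_+(\bv)$; the equality hypothesis has not yet been used, and the decisive step --- forcing $f^\ast$ to be a dilated Fej\'er kernel, equivalently forcing $gap(\bv)\in\{1/2,1/3,1/4,\dots\}$ --- is left open. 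That step is the entire content of the conjecture, so the proposal does not advance beyond what the paper already states. One smaller technical point: your reduction to $\gcd(v_1,\dots,v_{n-1})=1$ asserts that $\la_+$ is invariant under $\bv\mapsto a\bv$, but $f\mapsto f(a\,\cdot)$ only gives $\la_+(a\bv)\le\la_+(\bv)$; the reverse inequality is not immediate, so from $gap(a\bv)=\la_+(a\bv)$ you obtain only $gap(\bv)=\la_+(a\bv)\le\la_+(\bv)$, which is Theorem~\ref{thm:main0} again and does not by itself give equality for $\bv$.
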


It is unfortunate that the bounds generated for $\la_-(\bv)$ do not seem to be nearly as good as the bounds generated by $\la_+(\bv)$, and we believe this is because condition (II) seems to be very strong for $\epp=-1$. We need high degree polynomials and a large number of sampling points for feasibility of the linear program. In Section \ref{sec:lb} we propose an improved version of this lower bound.

We note that proving exact bounds is not hard as if some numerical $f$ satisfies $\epp f \geq -\delta$ in some region, but $\epp f$ should be nonnegative in that region, then all we have to do is use $g = f+\epp\delta$ as this would be admissible for $\Lambda_\pm(\bv)$ and $\ft {g}(0)/g(0) = {\ft f(0)}/{f(0)}+O(\delta)$.

\section{Proofs for the main results}
We start by recalling that Dirichlet's Approximation Theorem implies the Lonely Runner Conjecture is sharp; a rephrasing of Dirichlet's theorem is
$$
\max_{t\in\t}{\mu(t(1,2,...,n-1))} = \frac{1}{n}.
$$
The maxima is attained for $t=a/n$ for $a$ coprime with $n$. We now observe that inequality \eqref{eq:mainineq+} is also tight in case $\bv=(1,2,...,n-1)$, and Fej\'er's kernel 
\begin{align}\label{def:fejerker}
K_n(x)=\frac{1}{n}\left(\frac{\sin(\pi n x)}{\sin(\pi x)}\right)^2 =\sum_{|j|<n-1} (1-|j|/n)e^{2\pi i j x}
\end{align}
is the unique optimum. Optimality can easily be checked by hand, while uniqueness (modulo scaling) comes from the proof of Theorem \ref{thm:main0}. Essentially, because $f\geq 0$, we must have $f(k/n)=f'(k/n)=0$ for $k=1,...,n$ and Fej\'er's kernel is the only even trigonometric polynomial of degree $n-1$ with these properties.


\begin{proof}[\bf Proof of Theorem \ref{thm:main0}]
Let $\delta=gap(\bv)=\mu(t\bv)$ and $h(x)=(\delta-|x|)_+$ be a hat function. Since $\ft h(x) = (\sin(\pi \delta x)/(\pi x))^2$ we have
$$
h(x)=\sum_{n\in\z} \left(\frac{\sin(\pi \delta n)}{\pi n}\right)^2 e^{2\pi i n x}.
$$
If $f\in \Lambda_+(\bv)$ we obtain
\begin{align*}
\begin{split}
\delta\ft f(0) =\delta \ft f(0) + 2\sum_{j=1}^{n-1} \ft f(v_j) h(t v_j)   \geq \sum_{k=-D}^D \ft f(k) h(t k) = \sum_{k\in\z} \left(\frac{\sin(\pi \delta k)}{\pi k}\right)^2 f(kt) \geq \delta^2 f(0)
\end{split}
\end{align*}
which proves the upper bound. For the lower bound ($\epp=-1$), first recall that $t=p/q$, $\delta=a/b$ and $b$ divides $q$, while $q$ is a factor of some $v_j \pm v_i$ with $j>i$ (both fractions in lowest terms). In particular $q\leq v_{n-1}+v_{n-2}$. If $f\in \Lambda_-(\bv)$ we obtain
\begin{align*}
\begin{split}
\delta\ft f(0) =\delta \ft f(0) + 2\sum_{j=1}^{n-1} \ft f(v_j) h(t v_j)  & \leq \sum_{k=-D}^D \ft f(k) h(t k) \\ & = \delta^2 f(0)+ \sum_{k\in\z\setminus {b\z}} \left(\frac{\sin(\pi \tfrac{a}{b} k)}{\pi k}\right)^2 f(k\tfrac{p}{q})
\end{split}
\end{align*}
However, since $\{kp/q \mod 1: k\in\z_+\setminus {b\z}\}=\tfrac1{q}\{1,...,q-1\}\setminus b\z$, $q\leq v_{n-1}+v_{n-2}$ and $f(x)\leq 0$ for $1/(v_{n-1}+v_{n-2}) \leq x\leq 1/2$, then $f(kp/q)\leq 0$ for $k\in\z\setminus {b\z}$. This concludes the proof.
\end{proof}

This proof is inspired by the analytic proof of Dirichlet's approximation theorem due to Montgomery \cite{M94} (1994). We observe that equality is attained in \eqref{eq:mainineq+} or \eqref{eqmainineq-} if and only if there is $f\in \Lambda_s(\bv)$ such that:
\begin{enumerate}
\item[(a)] $\ft f(k)= 0$ if $k \notin\{v_1,...,v_{n-1}\}$;
\item[(b)] For some $t=p/q$ that is a global maxima of $t\mapsto \mu(t\bv)$, where $gap(\bv)=a/b$ and $b$ divides $q$ (both fractions in lowest terms) we have $f(k p/q)=0$ if $k\in \{1,...,q-1\}\setminus b \z$.
\end{enumerate}

\begin{proof}[\bf Proof of Theorem \ref{thm:main+}]
Condition (i) is easy to check because when all $v_i$'s are odd we have $gap(\bv)=1/2$ while $\ft f(0)=1/2$ and $f(0)=1$ for $f(x)=\cos(\pi v_i x )^2$. Next, if condition $(ii)$ holds, then letting $\bu=(1,...,m-1)$ and $\bw=\{\bv\}\setminus \{a\bu\}$ (abusing notation) we obtain
$$
\mu(t \bv) = \min(\mu(t a \bu),\mu(t \bw)) \leq \mu(t a \bu) \leq \frac{1}{m}.
$$
On the other hand, since $a$ is coprime with $m$ we have $\mu(\tfrac1m a \bu)=\tfrac{1}{m}$, and since $\tfrac{1}m\bw$ has no integer coordinate we have $\mu(\tfrac{1}m\bw)\geq \tfrac1m$. Therefore $\mu(\tfrac{1}m\bv)=\tfrac1m$ and we obtain that $gap(\bv)=\tfrac1m$. Now it is easy to check that $f(x)=K_m(ax)$ belongs to $\Lambda_+(\bv)$ and is optimal. Condition (iii) is trivial.
\end{proof}

\section{Improved lower bounds}\label{sec:lb}
Let $V_{q}$ be the class of vectors $\bv$ of increasing positive integers such that the global maxima of $t\mapsto \mu(t\bv)$ is attained at some point $t=p/q\in (0,\tfrac12)$ (in lowest terms). By the proof of Theorem \ref{thm:main0}, we see that if we let $\Lambda_-(\bv,q)$ be the class of functions satisfying the above conditions (I), (III) and
$$
\text{(II')}\ f(x)\leq 0 \ \ \text{for} \ \ \tfrac{1}{q}\leq x\leq \tfrac12,
$$
then 
\begin{equation}\label{eq:mainineq-}
\la_-(\bv,q):=\sup_{f\in \Lambda_-(\bv,q)} \frac{\ft f(0)}{f(0)} \ \leq \ gap(\bv)
\end{equation}
for any $\bv\in V_{q}$.  For instance $(1,...,n-1)\in V_{n}$, but some other examples of vectors in $V_n$ can be extracted from Goddyn and Wong \cite{GW06}, (2006). They present conditions for $\bv$ to be {\it tight}, that is, $gap(\bv)=\tfrac{1}{n}$. Some of these tight vectors characterized in \cite[Theorem 2.3]{GW06} belong to $V_n$, for instance: $$(1,2,...,n-3,n-1,2n-4)\in V_{n} \ \ \text{if} \ \ n=2 \ \ (\text{mod}\ 2\cdot 3)$$
$$(1,2,...,n-4,n-2,n-1,2n-6)\in V_{n} \ \ \text{if} \ \ n=3 \ \ (\text{mod}\ 2\cdot 3 \cdot 5).$$

\begin{theorem}\label{thm:main-}
Let $\bv=(v_1,...,v_{n-1})\in V_{q}$. Then equality is attained in \eqref{eq:mainineq-} if one of the following conditions hold:
\begin{enumerate}
\item[(i)] There exist integer $a\geq 1$ coprime with $q$ such that $a\{1,...,q-1\}\subset \{v_1,...,v_{n-1}\}$ and all integers in $\{v_1,...,v_{n-1}\}\setminus a\{1,...,q-1\}$ are not divisible by $q$. In this case 
$$
f(x)={K_q(ax)}\frac{1-\cos(\pi/q)^2}{(\cos(\pi a x)^2-\cos(\pi/q)^2)}
$$
is optimal, where $K_q$ is Fejer's kernel \eqref{def:fejerker}.
\item[(ii)] There exists an integer $a\geq 1$ such that $\bv=a\bv'$ and $\bv'$ satisfies condition (i). In this case if $f(x)$ is optimal for $\bv'$ then $f(ax)$ is optimal for $\bv$. 
\end{enumerate}
\end{theorem}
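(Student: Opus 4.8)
The strategy in both cases is to exhibit an $f\in\Lambda_-(\bv,q)$ realizing the equality conditions (a)--(b) recorded after the proof of Theorem \ref{thm:main0} (in the form appropriate to the improved bound \eqref{eq:mainineq-}, obtained by the same argument as in Section \ref{sec:lb}); once these hold one gets $\ft f(0)/f(0)=gap(\bv)$ automatically. I would first pin down $gap(\bv)$ under (i): writing $\bu=(1,\dots,q-1)$ and letting $\bw$ collect the coordinates of $\bv$ outside $a\{1,\dots,q-1\}$, we have $\mu(t\bv)=\min(\mu(ta\bu),\mu(t\bw))\leq\mu(ta\bu)\leq 1/q$ by Dirichlet's approximation theorem, while at $t=1/q$ the coprimality of $a$ with $q$ gives $\mu(\tfrac1q a\bu)=1/q$ and the hypothesis that $\bw$ avoids multiples of $q$ gives $\mu(\tfrac1q\bw)\geq 1/q$. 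Hence $gap(\bv)=1/q$, attained at $t=1/q$ (this re-proves $\bv\in V_q$ as well), so in (a)--(b) we may take the global maximizer to be $p/q=1/q$ and use $gap(\bv)=a/b=1/q$; thus (b) becomes the requirement that $f(k/q)=0$ for all $k=1,\dots,q-1$.

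Next I would verify that the $f$ in (i) is genuinely an admissible polynomial. Writing $K_q(y)=\tfrac1q\,U_{q-1}(\cos\pi y)^2$ with $U_{q-1}$ the Chebyshev polynomial of the second kind and factoring $U_{q-1}(z)=2^{q-1}\prod_{k=1}^{q-1}(z-\cos\tfrac{k\pi}{q})$, the denominator $\cos^2(\pi a x)-\cos^2(\pi/q)=(z-\cos\tfrac\pi q)(z-\cos\tfrac{(q-1)\pi}{q})$, with $z=\cos\pi a x$ and $\cos\tfrac{(q-1)\pi}{q}=-\cos\tfrac\pi q$, reduces two of the squared factors of $K_q(ax)$ to simple ones, leaving
$$
f(x)=\frac{2^{2q-2}\sin^2(\pi/q)}{q}\,\bigl(z-\cos\tfrac\pi q\bigr)\bigl(z-\cos\tfrac{(q-1)\pi}{q}\bigr)\prod_{k=2}^{q-2}\bigl(z-\cos\tfrac{k\pi}{q}\bigr)^2 ,
$$
an even polynomial of degree $2q-4$ in $z$, hence an even real trigonometric polynomial of degree $a(q-2)$ with $f(0)>0$. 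Since $a(q-1)$ is one of the $v_j$, we have $a(q-2)<a(q-1)\leq D$, which gives (I); the frequencies of $f$ lie in $a\{0,1,\dots,q-2\}\subset\{0,v_1,\dots,v_{n-1}\}$, which gives (III) and condition (a); and for $k=1,\dots,q-1$ the point $ak/q$ lies in $\tfrac1q\z\setminus\z$, where $K_q$ has a double zero while $\cos^2(\pi a x)-\cos^2(\pi/q)$ has at most a simple zero at $x=k/q$, so $f(k/q)=0$, which is condition (b).

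The step I expect to be the main obstacle is the sign condition (II'), namely $f(x)\leq 0$ for $x\in[1/q,1/2]$. Since $K_q(ax)\geq 0$ and $\sin^2(\pi/q)>0$, the sign of $f(x)$ is that of $\cos^2(\pi a x)-\cos^2(\pi/q)$, and $\cos^2(\pi a x)\leq\cos^2(\pi/q)$ is equivalent to $\|ax\|\geq 1/q$; so everything reduces to the elementary-looking but delicate claim that $\|ax\|\geq 1/q$ for every $x\in[1/q,1/2]$. For $a=1$ this is immediate, and for general admissible $a$ this is precisely where the arithmetic built into the hypotheses of (i) (coprimality of $a$ with $q$, together with the divisibility condition guaranteeing $gap(\bv)=1/q$) must be exploited. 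Granting this, $f\in\Lambda_-(\bv,q)$ meets (a)--(b), so equality holds in \eqref{eq:mainineq-}.

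For (ii) I would argue by scaling. If $f$ is optimal for $\bv'$, set $g(x)=f(ax)$; then $g$ is an even trigonometric polynomial whose frequencies are $a$ times those of $f$, hence contained in $\{0,v_1,\dots,v_{n-1}\}$, so (I) and (III)/(a) hold; since $\mu(t\bv)=\mu(ta\bv')$ we get $gap(\bv)=gap(\bv')$, so (b) for $\bv$ follows from (b) for $\bv'$; and $\ft g(0)/g(0)=\ft f(0)/f(0)=gap(\bv')=gap(\bv)$. As in (i), the one nontrivial point --- and again the main obstacle --- is that $g$ must still satisfy (II'): one has to deduce $f(ax)\leq 0$ on $[1/q,1/2]$ from $f\leq 0$ on $[1/q,1/2]$, i.e. control $\|ax\|$ there using the relation between a global maximizer $p/q$ of $\mu(\cdot\,\bv)$ and the corresponding maximizer of $\mu(\cdot\,\bv')$.
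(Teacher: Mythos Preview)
Your route to the ratio $\ft f(0)/f(0)=gap(\bv)$ differs from the paper's: the paper computes both sides directly, noting $f(0)=q$ and invoking the exact quadrature rule $\int_0^1 g=\tfrac1q\sum_{j=0}^{q-1}g(j/q)$ for trigonometric polynomials of degree $<q$, together with $f(j/q)=0$ for $j=1,\dots,q-1$, to get $\ft f(0)=1$; you instead appeal to the equality conditions (a)--(b). Both are valid once admissibility is established, and your Chebyshev factorization is a clean way to verify (I), (III), and (b).

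You are right to single out (II') as the crux, and your reduction of (II') to the claim $\|ax\|\geq 1/q$ on $[1/q,1/2]$ is correct. But your hope that ``the arithmetic built into the hypotheses'' rescues the case $a>1$ is misplaced: for any $a\geq 2$ and $q\geq 3$ one checks that $x_0:=\lfloor a/2\rfloor/a$ lies in $[1/q,1/2]$, and $\|ax_0\|=0$, hence $f(x_0)=f(0)=q>0$. Thus (II') fails outright for $a>1$ (e.g.\ $a=2$, $q=5$, $x=1/2$), so the displayed $f$ is not even admissible for $\Lambda_-(\bv,q)$; the same obstruction applies verbatim to (ii) when the scaling factor exceeds $1$. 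The paper's proof glosses over exactly this point with ``it is easy to see''; as written, both arguments are complete only when $a=1$.
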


\begin{proof}
Assume condition (i). By the same discussion in the proof of Theorem \ref{thm:main+} we have $gap(\bv)=1/q$. It is easy to see that $f$ satisfies conditions (I),(II'),(III). Moreover its mass equals the mass of $f(x/a)$, which in turn, by exact Gaussian quadrature, equals the mass of $K_q(x)$, which is $1$. Also $f(0)=q$. This shows optimality. Condition (ii) is trivial.
\end{proof}

\subsection{Numerics}
We used Gurobi \cite{gurobi} to compute $\la_\pm(\bv)$ and $\la_-(q,\bv)$ for $2429$ different velocity vectors $\bv=(v_1,...,v_5)$ selected randomly from $0<v_1<...<v_5\leq 50$. In Figure \ref{fig} we plot points $(x,y)$ which are numerical approximations to $(\la_-(\bv,q),gap(\bv))$ in blue dots, $(\la_-(\bv),gap(\bv))$ in yellow triangles and $(\la_+(\bv),gap(\bv))$ in green squares. We took $q=\text{denominator}(t_{max})$ where $\mu(t_{max}\bv)=gap(\bv)$, and $t_{max}$ is the smallest with this property. The diagonal blue line is $x=y$ and the gray vertical line is $x=1/6$. We note that we get much better lower bounds if we know a priori that $\bv\in V_q$, as the yellow triangles are clearly much closer to the line $x=y$ than the blue dots. The plot appears to have some interesting emergent structures: rays of triangles and parabola-like green structures.

\begin{figure}[!h]
\begin{center}
\includegraphics[scale=1.1]{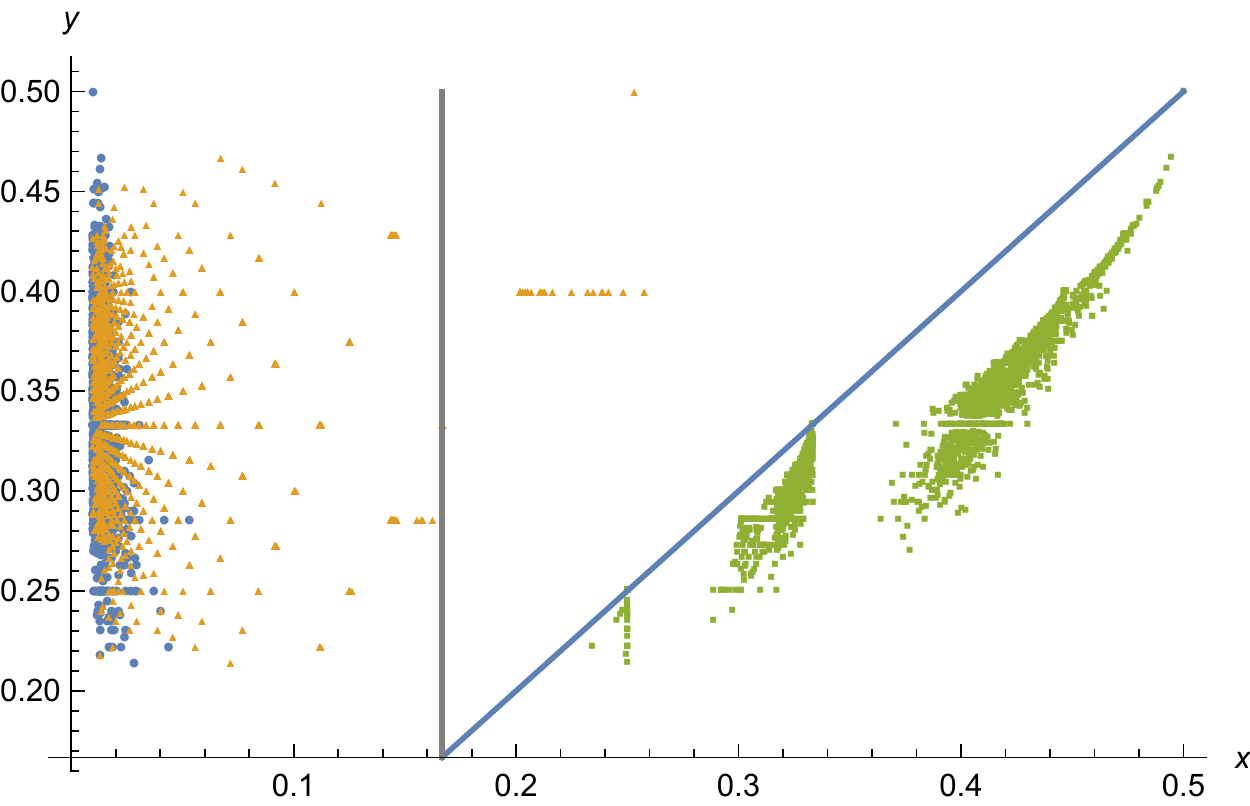}
\caption{}
\label{fig}
\end{center}
\end{figure}

\section*{Acknowledgements}
The first author is thankful to Jeffrey Vaaler for helpful comments. 
F.G.\@ acknowledges support from the Deutsche Forschungsgemeinschaft through the Collaborative Research Center 1060.

\end{document}